\newtheorem{theorem}{Theorem}
\newtheorem{lemma}{Lemma}
\newtheorem{claim}{Claim}
\newtheorem{corollary}{Corollary}
\newtheorem{definition}{Definition}
\author{Aistis Atminas\thanks{DIMAP and Mathematics Institute, University of Warwick, Coventry CV4 7AL, UK. E-mail: A.Atminas@warwick.ac.uk.}
\and Vadim V.~Lozin\thanks{DIMAP and Mathematics Institute, University of Warwick, Coventry CV4 7AL, UK. E-mail: V.Lozin@warwick.ac.uk.
Research of this author was supported by the Centre for Discrete Mathematics 
and Its Applications (DIMAP), University of Warwick, and by EPSRC grant EP/I01795X/1.} 
\and Igor Razgon\thanks{Department of Computer Science and Information Systems,
Birkbeck, University of London, E-mail: igor@dc.bbk.ac.uk}}
\date{}
\title{Graphs without large bicliques and \\ well-quasi-orderability  by the induced
       subgraph relation} 
\begin{document}

\maketitle

\begin{abstract}
Recently,  Daligault, Rao and Thomass\'e asked in \cite{DRT10} if every hereditary 
class which is well-quasi-ordered by the induced subgraph relation is of bounded clique-width.
There are two reasons why this questions is interesting. First, it connects two seemingly unrelated
notions. Second, if the question is answered affirmatively, this will have a strong algorithmic 
consequence. In particular, this will mean (through the use of
Courcelle theorem \cite{CorMakRotics}), that any problem definable in Monadic Second Order Logic can be solved in
a polynomial time on any class well-quasi-ordered by the induced subgraph relation.

In the present paper, we answer this question affirmatively for graphs without large bicliques.
Thus the above algorithmic consequence is true, for example, for classes of graphs of bounded degree. 
\end{abstract}

{\it MSC codes:} 05C75 Structural characterization of families of graphs; 
                 05C85 Graph algorithms.

\section{Introduction}
Well-quasi-ordering is a highly desirable property and a frequently discovered
concept in mathematics and theoretical computer science \cite{Finkel,Kruskal}. 
One of the most remarkable recent results in this area is the proof of Wagner's conjecture
stating that the set of all finite graphs is well-quasi-ordered by the minor relation
\cite{minor-wqo}. However, the subgraph or induced subgraph relation is not a well-quasi-order.
On the other hand, each of these relations may become a well-quasi-order when restricted to graphs 
with some special properties. In this paper, we study well-quasi-orderability of graphs with 
hereditary properties.

A {\it  graph property} (or a {\it class of  graphs}) is a set of graphs closed under isomorphism.
A property is {\it hereditary} if it is closed under taking induced subgraphs.
It is well-known (and not difficult to see) that a graph property $X$ is hereditary if and only if 
$X$ can be described in terms of forbidden induced subgraphs. More formally, $X$ is hereditary 
if and only if there is a set $M$ of graphs such that no graph in $X$ contains any graph from $M$
as an induced subgraph. We call $M$ the set of {\it forbidden induced subgraphs} for  $X$ and 
say that the graphs in $X$ are $M$-free.  

Of our particular interest in this paper are graphs \emph{without large bicliques}.
We say that the graphs in a hereditary class $X$ are \emph{without large bicliques} if 
there is a natural number $t$ such that no graph in $X$ contains $K_{t,t}$ as a (not necessarily induced) subgraph.
Equivalently, there are $q$ and $r$ such $K_{q,q}$ and $K_r$ appear in the set of forbidden induced subgraphs of $X$.
According to \cite{Zaran}, these are precisely graphs with a subquadratic number of edges. This family of properties 
includes many important classes, such as graphs of bounded vertex degree, of bounded tree-width, all proper minor closed graph classes.
In all these examples, the number of edges is bounded by a linear function in the number of vertices
and all of the listed properties are rather small (see e.g. \cite{minor-closed-small} for the number of graphs in 
proper minor closed graph classes). In the terminology of \cite{speed}, they all are at most factorial.
In fact the family of classes without large bicliques is much richer and contains classes with a superfactorial speed of growth, 
such as projective plane graphs (or more generally $C_4$-free bipartite graphs), in which case the number of edges is $\Theta(n^{\frac{3}{2}})$.

Recently,  Daligault, Rao and Thomass\'e asked in \cite{DRT10} if every hereditary 
class which is well-quasi-ordered by the induced subgraph relation is of bounded clique-width.
There are two reasons why this questions is interesting. First, it connects two seemingly unrelated
notions. Second, if the question is answered affirmatively, this will have a strong algorithmic consequence.
In particular, this will mean (through the use of
Courcelle theorem \cite{CorMakRotics}), that any problem definable in Monadic Second Order Logic can be solved in
a polynomial time on any class well-quasi-ordered by the induced subgraph relation.

In the present paper, we answer this question affirmatively for graphs without large bicliques. 
More precisely, we prove that if a class $X$ without large bicliques is well-quasi-ordered by the induced subgraph
relation, then the graphs in $X$ have bounded treewidth, i.e. there is a constant $c$ such that the treewidth of any
graph in $X$ is at most $c$. Since treewidth and cliquewidth of graphs without large bicliques are known
to be equivalent in the sense that one is bounded if and only if the other is \cite{GurWan}, the result affirmatively 
answers the question in \cite{DRT10} for graphs without large bicliques.  Thus the above algorithmic consequence
is confirmed e.g. for classes of graphs of bounded degree.

In order to establish the main result (Theorem~\ref{thm:main-wqo}), we define in Section~\ref{sec:basic} an infinite family 
of graphs pairwise incomparable by the induced subgraph relation, which we call \emph{canonical graphs}.
The main part of the proof of Theorem \ref{thm:main-wqo} is a combinatorial result stating that a graph without large bicliques
and having a large treewidth has a large induced canonical graph. A consequence of this result is that
if a class $X$ without large bicliques has unbounded treewidth, then $X$ contains an infinite subset of
canonical graphs, i.e. an infinite antichain. This implies that classes of graphs without large bicliques
that are well quasi-ordered by the induced subgraph relation must have bounded treewidth.

To prove the main theorem, we first prove an auxiliary result (Theorem~\ref{thm:main})
stating that if a graph without large bicliques has a long path, it also has a long induced path. 
We note that this auxiliary theorem is sufficient to establish the main result of the paper if we confine
ourselves to hereditary classes with a \emph{finite} number of forbidden induced subgraphs. 

All preliminary information related to the topic of the paper can be found in Section~\ref{sec:basic}.
In Sections~\ref{sec:paths} and~\ref{sec:wqo} we prove  Theorem~\ref{thm:main}  and Theorem~\ref{thm:main-wqo},
respectively.

\section{Notations and definitions}
\label{sec:basic}

We consider only simple undirected graphs without loops and multiple edges.
An {\it independent set} in a graph is a set of vertices no two of which are adjacent, 
and a {\it clique} is a set of vertices every two of which are adjacent. 
As usual, by $K_n$, $P_n$ and $C_n$ we denote the complete graph, the chordless path and the chordless cycle
on $n$ vertices, respectively, and $K_{n,m}$ is a complete bipartite graph with parts of size $n$ and $m$. 
Sometimes we also refer to $K_n$ as a clique and to $K_{n,m}$ as a biclique.  
If $n=m$ we say that $K_{n,m}$ is a biclique of {\it order} $n$.  

Given a graph $G$ and a subset $U$ of its vertices, the operation of contraction of $U$
into a single vertex $u$ consists in deleting $U$, introducing $u$ and 
connecting $u$ to every vertex of $G$ outside $U$ that has a neighbour in $U$.
If $U$ consists of two adjacent vertices, this operation is called edge contraction.   
 
Let $H$ and $G$ be two graphs. We say that  
\begin{itemize}
\item $H$ is an {\it induced subgraph} of $G$ if $H$ can be obtained from $G$ by vertex deletions,
\item $H$ is a {\it subgraph} of $G$ if $H$ can be obtained from $G$ by vertex deletions and edge deletions,
\item $H$ is a {\it minor} of $G$ if $H$ can be obtained from $G$ by vertex deletions, edge deletions and edge contractions.
\end{itemize}

Throughout the text, whenever we say that $G$ contains $H$, we mean that $H$ is a subgraph of $G$, 
unless we explicitly say that $H$ is an {\it induced} subgraph of $G$ (or $G$ contains $H$ as an {\it induced} subgraph).
If $H$ is not an induced subgraph of $G$, we say that $G$ is $H$-free. 

By $R=R(k,r,m)$, we denote the Ramsey number, i.e. the minimum $R$ such that in every 
colouring of $k$-subsets of an $R$-set with $r$ colours there is a monochromatic
$m$-set, i.e. a set of $m$ elements all of whose $k$-subsets have the same colour.


A binary relation $\le$ on a set $X$ is a {\it quasi-order} if it is reflexive and transitive.
If additionally $\le$ is antisymmetric, then it is a partial order. 
Two elements $x,y\in X$ are said to be incomparable if neither $x\le y$ nor $y\le x$. 
An {\it antichain} in a quasi-order is a set of pairwise incomparable elements.   
A quasi-order $(X,\le)$ is a {\it well-quasi-order} if $X$
contains no infinite strictly decreasing sequences and no infinite antichains. 

According to the celebrated Graph Minor Theorem of Robertson and Seymour, 
the set of all graphs is well-quasi-ordered by the graph minor relation \cite{minor-wqo}. 
This, however, is not the case for the more restrictive relations such as subgraph or induced subgraph.
Consider for instance the graphs $H_1,H_2,\ldots$, where $H_i$ is the graph represented in Figure~\ref{fig:H}.
It is not difficult to see that this sequence creates an infinite antichain with respect to both 
subgraph and induced subgraph relations.

By connecting two vertices of degree one having a common neighbour in $H_i$,
we obtain a graph represented on the left of Figure~\ref{fig:H'H''}. 
Let us denote this graph by $H'_i$. By further connecting the other 
pair of vertices of degree one we obtain the graph $H''_i$  
represented on the right of Figure~\ref{fig:H'H''}.
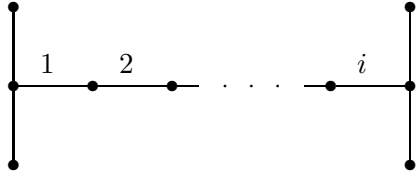
\begin{figure}[ht]
\begin{center}
\begin{picture}(180,70)
\put(0,35){\circle*{4}} \put(30,35){\circle*{4}}
\put(60,35){\circle*{4}} \put(80,35){\circle*{1}}
\put(90,35){\circle*{1}} \put(100,35){\circle*{1}}
\put(120,35){\circle*{4}} \put(150,35){\circle*{4}}
\put(0,65){\circle*{4}} \put(0,5){\circle*{4}}
\put(150,65){\circle*{4}} \put(150,5){\circle*{4}}
\put(2,35){\line(1,0){26}} \put(32,35){\line(1,0){26}}
\put(62,35){\line(1,0){8}} \put(110,35){\line(1,0){8}}
\put(122,35){\line(1,0){26}} \put(0,37){\line(0,1){26}}
\put(0,33){\line(0,-1){26}} \put(150,37){\line(0,1){26}}
\put(150,33){\line(0,-1){26}} \put(10,40){1} \put(40,40){2}
\put(130,40){$i$}
\end{picture}
\caption{The graph $H_{i}$}
\label{fig:H}
\end{center}
\end{figure}
\begin{figure}[ht]
\begin{center}
\begin{picture}(170,65)
\put(0,5){\circle*{4}}
\put(0,45){\circle*{4}} 
\put(20,25){\circle*{4}} 
\put(50,25){\circle*{4}}
\put(70,25){\circle*{1}} 
\put(80,25){\circle*{1}}
\put(90,25){\circle*{1}}
\put(110,25){\circle*{4}} 
\put(140,25){\circle*{4}}
\put(22,25){\line(1,0){26}} 
\put(18,23){\line(-1,-1){16}}
\put(18,27){\line(-1,1){16}}
\put(142,23){\line(1,-1){16}}
\put(142,27){\line(1,1){16}}
\put(52,25){\line(1,0){8}} 
\put(100,25){\line(1,0){8}}
\put(112,25){\line(1,0){26}}
\put(160,5){\circle*{4}}
\put(160,45){\circle*{4}}
\put(160,7){\line(0,1){36}}
\put(33,30){1}
\put(123,30){$i$}
\end{picture}
\begin{picture}(170,65)
\put(0,5){\circle*{4}}
\put(0,45){\circle*{4}} 
\put(20,25){\circle*{4}} 
\put(50,25){\circle*{4}}
\put(70,25){\circle*{1}} 
\put(80,25){\circle*{1}}
\put(90,25){\circle*{1}}
\put(110,25){\circle*{4}} 
\put(140,25){\circle*{4}}
\put(0,7){\line(0,1){36}}
\put(22,25){\line(1,0){26}} 
\put(18,23){\line(-1,-1){16}}
\put(18,27){\line(-1,1){16}}
\put(142,23){\line(1,-1){16}}
\put(142,27){\line(1,1){16}}
\put(52,25){\line(1,0){8}} 
\put(100,25){\line(1,0){8}}
\put(112,25){\line(1,0){26}}
\put(160,5){\circle*{4}}
\put(160,45){\circle*{4}}
\put(160,7){\line(0,1){36}}
\put(33,30){1}
\put(123,30){$i$}
\end{picture}
\caption{Graphs $H'_{i}$ and $H''_{i}$}
\label{fig:H'H''}
\end{center}
\end{figure}
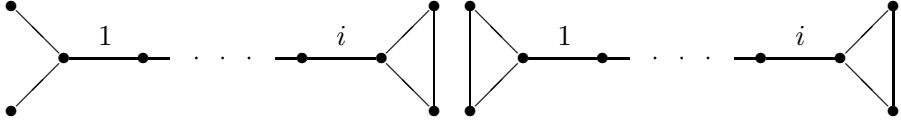

We call any graph of the form $H_i$, $H'_i$ or $H''_i$ an $H$-graph. Furthermore, 
we will refer to $H''_i$ a {\it tight} $H$-graph and to $H'_i$ a {\it semi-tight} $H$-graph.
In an $H$-graph, the path connecting two vertices of degree 3  will be called the {\it body} of the graph,
and the vertices which are not in the body the {\it wings}. 

Following standard graph theory terminology, we call a chordless cycle of length at least four a {\it hole}.
Let us denote by 
\begin{itemize}
\item[$\cal C$] the set of all holes and all $H$-graphs. 
\end{itemize}
It is not difficult to see that any two distinct (i.e. non-isomorphic) graphs in $\cal C$ are incomparable with 
respect to the induced subgraph relation. In other words,
\begin{claim}\label{claim:antichain}
$\cal C$ is an antichain with respect to the induced subgraph relation.
\end{claim}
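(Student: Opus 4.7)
The plan is to proceed by cases on the types of the two graphs in $\mathcal{C}$, using three invariants per graph: the number of vertices, the maximum degree $\Delta$, and the number of triangles $t$. One checks directly that $\Delta(C_n)=2$ while every $H$-graph satisfies $\Delta=3$, and that $t(C_n)=t(H_i)=0$, $t(H'_i)=1$, $t(H''_i)=2$; moreover, each $H$-graph has exactly two vertices of degree $3$, namely its two body endpoints.

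First I would rule out the cases involving holes. Two distinct holes $C_n, C_m$ are incomparable since deleting any vertex of a cycle produces a path rather than a shorter cycle, and cardinality excludes the other direction. A hole and an $H$-graph are incomparable because the $H$-graph contains a vertex of degree $3$ (ruling out embedding it into a $2$-regular hole) while every cycle in any $H$-graph is a triangle (ruling out embedding any hole $C_n$ with $n\geq 4$ into an $H$-graph).

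The heart of the argument is the case of two $H$-graphs, and here I would use the triangle count. Suppose $G_1$ embeds as an induced subgraph of $G_2$. On the one hand, an induced subgraph cannot have more triangles than its host, so $t(G_1)\leq t(G_2)$. On the other hand, the two degree-$3$ vertices of $G_1$ must embed into vertices of degree $\geq 3$ in $G_2$, which are precisely the body endpoints; since the embedding preserves degree at these vertices, all three $G_2$-neighbours of each body endpoint lie in the induced copy, and this drags every triangle of $G_2$ into $G_1$. Thus $t(G_1)\geq t(G_2)$, and combining the two inequalities gives $t(G_1)=t(G_2)$, i.e., $G_1$ and $G_2$ have the same flavour in $\{H,H',H''\}$.

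It remains to handle two $H$-graphs of the same flavour but different body lengths, say $i<j$. Cardinality rules out $G_2$ being induced in $G_1$. In the other direction, the previous step places the two body endpoints of $G_1$ at the two body endpoints of $G_2$ and pulls in all four wings; since the induced copy is connected, it must contain a simple path in $G_2$ between these two endpoints, and in each of $H_j, H'_j, H''_j$ the unique such path is the body itself, so the induced copy contains the entire body, i.e.\ all of $G_2$, forcing $i=j$ --- a contradiction. The only subtlety worth checking carefully is that the two degree-$3$ vertices of $G_1$ cannot be sent to the ``wrong'' endpoints of $G_2$ via a symmetry: a triangle-end of $G_1$ has two mutually adjacent neighbours at it, and this property fails at any non-triangle body endpoint of $G_2$, so the assignment of endpoints is essentially forced. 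This bookkeeping is the main obstacle; the rest of the argument is routine.
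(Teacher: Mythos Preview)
Your proof is correct. Note, however, that the paper does not actually supply a proof of this claim---it simply asserts that ``it is not difficult to see that any two distinct (i.e.\ non-isomorphic) graphs in $\cal C$ are incomparable'' and leaves the verification to the reader---so there is nothing in the paper to compare your argument against.

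Your approach via the invariants $\Delta$ and the triangle count $t$, followed by the forced-surjectivity argument (the two degree-$3$ vertices of $G_1$ must hit the two body endpoints of $G_2$, dragging in all wings and, by connectivity, the entire body), is a sound and complete justification. One minor remark: the final ``subtlety'' you flag about which body endpoint maps to which is in fact unnecessary. Your argument already shows that the induced copy of $G_1$ contains every vertex of $G_2$, and cardinality alone then yields the contradiction $|V(G_1)|\geq |V(G_2)|>|V(G_1)|$, regardless of the orientation of the embedding.
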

Moreover, from the poof of Theorem~\ref{thm:main-wqo} we will see that 
for classes of graphs without large bicliques which are of unbounded tree-width this antichain is unavoidable,
or {\it canonical}, in the terminology of \cite{Ding09}. Suggested by this observation, 
we introduce the following definition.

\begin{definition}
The graphs in the set $\cal C$ will be called {\sc canonical}.
\end{definition}

The {\it order} of a canonical graph $G$ is either the number of its vertices, if $G$ is a hole,
or the the number of vertices in its body, if $G$ is an $H$-graph.


\section{Long paths in graphs without large bicliques}
\label{sec:paths}


In this section, we prove that graphs without large bicliques containing 
a large path also contain a large induced (i.e. chordless) path.
We start with the following auxiliary result, where by 
$P(r,m)$, we denote the minimum $n$ such that in every 
colouring of the elements of an $n$-set with $r$ colours there exists 
a subset of $m$ elements of the same colour (the pigeonhole principle).

\begin{lemma}\label{lem:grid}
For each $p$ and $q$ there is a number $C=C(p,q)$ such that 
whenever a graph $G$ contains two families of sets $\mathcal{A}=\{V_1, V_2, \ldots, V_{C}\}$ 
and $\mathcal{B}=\{W_1, W_2, \ldots, W_{C}\}$ with all sets being disjoint of size $p$ 
and with at least one edge between every two sets $V_i\in \mathcal{A}$ and $W_j\in\mathcal{B}$,
then $G$ contains a biclique $K_{q,q}$.
\end{lemma}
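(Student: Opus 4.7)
The plan is to turn the at-least-one-edge hypothesis into a combinatorial colouring problem and then apply a Ramsey-type argument. First, fix once and for all a bijection between the $p$ vertices of each $V_i$ and the label set $[p]=\{1,\ldots,p\}$, and do the same for each $W_j$. For every pair $(i,j)\in[C]^2$ choose an arbitrary edge $e_{ij}$ between $V_i$ and $W_j$ (which exists by hypothesis), and let $(a_{ij},b_{ij})\in[p]^2$ record the labels of its endpoints in $V_i$ and $W_j$ respectively. The assignment $\chi(i,j):=(a_{ij},b_{ij})$ is a colouring of $[C]\times[C]$ using the $p^2$-element palette $[p]^2$.

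The central step is a bipartite Ramsey statement: for $C$ sufficiently large in terms of $p$ and $q$, every colouring of $[C]\times[C]$ by $p^2$ colours contains a constant $q\times q$ rectangle, i.e.\ sets $I,J\subseteq[C]$ of size $q$ and a colour $(a,b)\in[p]^2$ with $\chi(i,j)=(a,b)$ for all $(i,j)\in I\times J$. One way to obtain such a $C$ is to pass to a densest colour class, which as a spanning subgraph of $K_{C,C}$ has at least $C^2/p^2$ edges, and then invoke the K\H{o}v\'ari--S\'os--Tur\'an theorem to extract a $K_{q,q}$ inside that class. Alternatively one can argue directly by an iterated pigeonhole on rows and columns, which yields an explicit (if large) bound on $C$; I would lean on whichever formulation is already in the paper's toolkit, possibly phrased using the Ramsey number $R(k,r,m)$ introduced in Section~\ref{sec:basic}.

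To finish, suppose $I,J,a,b$ have been produced. Let $v_i$ be the vertex of $V_i$ carrying label $a$ for $i\in I$ and $w_j$ the vertex of $W_j$ carrying label $b$ for $j\in J$. Since the sets in $\mathcal{A}\cup\mathcal{B}$ are pairwise disjoint, the $2q$ vertices $\{v_i:i\in I\}\cup\{w_j:j\in J\}$ are all distinct; and by the choice of $a,b$ the edge $e_{ij}$ coincides with $v_iw_j$ for every $(i,j)\in I\times J$, so $v_iw_j\in E(G)$ throughout. Hence these vertices span a $K_{q,q}$ as a subgraph of $G$, as required.

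The main obstacle is the bipartite Ramsey step: the encoding into a $p^2$-colouring and the extraction of $K_{q,q}$ from a monochromatic rectangle are routine, but one needs a clean quantitative argument guaranteeing a constant $q\times q$ submatrix. Everything else is bookkeeping, and the labelling trick is what reduces the whole problem to this one Ramsey-type lemma.
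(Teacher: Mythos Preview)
Your proposal is correct and is essentially the paper's proof in different packaging: the paper carries out directly the two-step iterated pigeonhole you allude to (first a $p^r$-colouring of $\mathcal{B}$ with $r=P(p^q,q)$, then a $p^q$-colouring of the resulting $r$ vertices, taking $C=P(p^r,q)$), rather than abstracting the step into a bipartite-Ramsey or K\H{o}v\'ari--S\'os--Tur\'an lemma. The encoding by vertex labels and the final extraction of the $K_{q,q}$ are identical in spirit.
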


\begin{proof} 
We define $r:=P(p^q,q)$ and $C(p,q):=P(p^{r},q)$ and consider an arbitrary collection $A$ 
of $r$ sets from $\cal A$. Since each set in $\cal B$ has a neighbour in each set in $\cal A$,
the family of the sets in $\cal B$ can be coloured with at most $p^r$ colours so that all sets 
of the same colour have a common neighbour in each of the $r$ chosen sets of collection $A$. 
By the choice of $C(p,q)$, one of the colour classes contains a collection $B$ of at least $q$ sets. 
For each set in $A$, we choose a vertex which is a common neighbour for all sets in $B$ and denote 
the set of $r$ chosen vertices by $U$. The vertices of $U$ can be coloured with at most $p^q$ colours 
so that all vertices of the same colour have a common neighbour in each of the $q$ sets of collection $B$. 
By the choice of $r$, $U$ contains a colour class $U_1$ of least $q$ vertices. For each set in $B$, 
we choose a vertex which is a common neighbour for all vertices of $U_1$ and denote the set of $q$ 
chosen vertices by $U_2$. Then $U_1$ and $U_2$ form a biclique $K_{q,q}$. 
\end{proof}

\begin{theorem}\label{thm:main2}
For every $s$ and $q$ there is a number $Y=Y(s,q)$ such that every graph with a path of length at least $Y$ 
contains either a path $P_s$ as an induced subgraph or a biclique $K_{\lfloor q/2 \rfloor, \lceil q/2 \rceil}$ 
as a (not necessarily induced) subgraph. 
\end{theorem}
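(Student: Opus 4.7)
The plan is to prove Theorem~\ref{thm:main2} by induction on $s$, with Lemma~\ref{lem:grid} providing the biclique whenever enough edges are present. The base $s\le 2$ is immediate since every edge is a $P_2$. For the inductive step I would set $T:=Y(s-1,q)$, $q':=\lceil q/2\rceil$, and $C:=C(T,q')$ (the constant from Lemma~\ref{lem:grid}), and I would take $R$ to be a Ramsey number large enough that any $2$-colouring of the unordered pairs from an $R$-set yields a monochromatic subset of size $2C$. The candidate value is $Y(s,q):=RT$.

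Given a path $v_1v_2\cdots v_Y$, I would cut it into $R$ consecutive blocks $Q_1,\ldots,Q_R$ of length $T$ each. Applying the inductive hypothesis inside every $G[Q_i]$ either hands us the desired biclique (and we are done) or returns an induced $P_{s-1}$ inside $Q_i$, which I call $\Pi_i$. I then colour each pair $\{i,j\}$ with $j\ge i+2$ \emph{red} if $G$ has at least one edge with endpoints in $Q_i$ and $Q_j$, and \emph{blue} otherwise. By the choice of $R$, Ramsey's theorem produces a monochromatic family of $2C$ indices $i_1<\cdots<i_{2C}$ with consecutive gaps of at least $2$.

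In the red case, the two halves $\mathcal{A}=\{Q_{i_1},\ldots,Q_{i_C}\}$ and $\mathcal{B}=\{Q_{i_{C+1}},\ldots,Q_{i_{2C}}\}$ form two families of $C$ pairwise disjoint sets of size $T$ with at least one edge between every cross-pair, so Lemma~\ref{lem:grid} immediately yields a biclique $K_{q',q'}$, which in particular contains $K_{\lfloor q/2\rfloor,\lceil q/2\rceil}$.

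The blue case is where the real work lies. There the $2C$ selected blocks are pairwise non-adjacent in $G$ (no cross-edges and, since they are separated on the path by at least one unselected block, no path-edges between them either), yet one still has to produce either an induced $P_s$ or the biclique. The plan is to try to extend some $\Pi_{i_k}$ by a vertex lying just outside it on the path---most naturally the path-neighbour of an endpoint of $\Pi_{i_k}$---which succeeds unless that neighbour has a chord back into $\Pi_{i_k}$. The hard part will be to show that if this obstruction recurs across enough of the $2C$ selected blocks, the accumulated chords can be organised into two appropriately chosen set families to which Lemma~\ref{lem:grid} applies once more, delivering the required biclique and closing the induction.
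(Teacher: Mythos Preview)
Your red case is fine (modulo a small technicality: Ramsey colours \emph{all} pairs, so just colour consecutive pairs red---they do have a path-edge---and the rest of the argument goes through). The gap is in the blue case, and it is a real one.

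In the blue case your selected blocks are mutually non-adjacent, and you hold an induced $P_{s-1}$ \emph{inside} each block. You then propose to extend one such $\Pi_{i_k}=a_1\cdots a_{s-1}$ by a path-neighbour $v$ of an endpoint. The obstruction is a chord from $v$ back into $\Pi_{i_k}$; but this chord lives entirely inside $Q_{i_k}$ (or between $Q_{i_k}$ and an adjacent \emph{unselected} block), and you get at most one or two such chords per block. These chords are local and independent across blocks: there is no mechanism by which they produce two families $\mathcal A,\mathcal B$ with an edge between \emph{every} cross-pair, which is what Lemma~\ref{lem:grid} needs. So the sentence ``the accumulated chords can be organised into two appropriately chosen set families to which Lemma~\ref{lem:grid} applies'' is not justified, and I do not see how to make it work with induction on $s$ alone.

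The paper's proof avoids this by applying the inductive hypothesis not to the individual blocks but to the \emph{contracted} graph $H$ (one vertex per block, edges recording cross-block adjacency). An induced $P_{s-1}$ in $H$ gives $s-1$ blocks with $G$-edges only between consecutive ones, so a shortest path through them is already almost an induced $P_s$; the remaining bad case (a vertex dominating the first block) is handled by a second, simultaneous induction on $q$, using $|V_{i_1}|=Y(s,q-1)$. That double induction on $(s,q)$ is the idea your single-parameter scheme is missing.
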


\begin{proof}
We use induction on $s$ and $q$. For $s=1$ and  arbitrary $q$ or for $q=1$ and arbitrary  $s$, 
we can take $Y(s,q)=1$. So assume $s>1$ and $q>1$. Let $t=Y(s, q - 1)$ and $k=Y(s - 1, C(t, q))$.
Both numbers must exist by the induction hypothesis. 

Consider a graph $G$ with a path $P=v_1v_2 \ldots v_{kt}$ on $kt$ vertices
and  split $P$ into $k$ subpaths of $t$ vertices each. We denote the vertices of the $i$-th subpath by 
$V_i$
and form a graph $H$ on $k$ vertices $\{h_1, h_2,  \ldots , h_k\}$ in which $h_ih_j$ is an edge if and only if 
there is an edge in $G$ joining a vertex of $V_i$ to a vertex of $V_j$. 
Since $h_i$ is joined to $h_{i+1}$ for each $i=1,\ldots,k-1$, the graph $H$ has a path 
on $k$ vertices, and since $k=Y(s - 1, C(t, q))$, it has either an induced path on $s-1$ vertices or a biclique of order $C(t,q)$. 
In the graph $G$, the latter case corresponds to two families of $C(t,q)$ pairwise disjoint subsets with $t$ vertices in each subset 
and with an edge between any two subsets from different families. 
Therefore, Lemma~\ref{lem:grid} applies proving that $G$ contains a biclique $K_{q,q}$. 

Now assume  $H$ contains an induced path $P_{s-1}$. In the graph $G$, this path corresponds to an ordered sequence of subsets 
$V_{i_1}, V_{i_2}, \ldots, V_{i_{s-1}}$ with edges appearing only between consecutive subsets of the sequence. 
Therefore, in the subgraph of $G$ induced by these subsets, 
any vertex $v$ in $V_{i_1}$ is of distance at least $s - 2$ from any vertex $u$ in $V_{s-1}$.
If the distance between $v$ and $u$ is $s-1$, the graph $G$ has an induced path $P_s$ and we are done.
So, assume the distance between any two vertices of $V_{i_1}$ and $V_{i_{s-1}}$
is exactly $s - 2$, and consider a path with exactly one vertex $w_p$ in each $V_{i_p}$. 

If vertex $w_1$ has a neighbour $w\in V_{i_1}$ which is not adjacent to $w_2$, then
$ww_1w_2 \ldots w_{s-1}$ is an induced path $P_s$ and we are done. Therefore, we must assume that 
$w_2$ is adjacent to every vertex of $V_{i_1}$, since this set induces a connected subgraph.    
As the size of $V_{i_1}$ is $t=Y(s, q - 1)$, it contains either an induced path $P_s$, in which case we are done, 
or a biclique $K_{\lfloor (q-1)/2 \rfloor, \lceil (q-1)/2 \rceil}$. In the latter case, the biclique together with 
$w_2$ form a biclique of the desired size $K_{\lfloor q/2 \rfloor, \lceil q/2 \rceil}$, 
so we are done as well. This completes the proof.
\end{proof}

Taking into account that a large biclique gives rise either to a large induced biclique or a large clique,
Theorem~\ref{thm:main2} can also be restated as follows. 

\begin{theorem}\label{thm:main}
For every $s$, $t$, and $q$, there is a number $Z=Z(s,t,q)$ such that every 
graph with a path of length at least $Z$ contains either $P_s$ or $K_t$ or $K_{q,q}$
as an induced subgraph. 
\end{theorem}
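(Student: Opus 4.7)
The plan is to derive Theorem~\ref{thm:main} from Theorem~\ref{thm:main2} via a routine Ramsey-type extraction. The only real work is converting a (possibly non-induced) large biclique into a genuine induced $K_{q,q}$ or $K_t$ by cleaning up the edges inside each side.

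First I would set $R := R_2(t,q)$ to be the classical Ramsey number guaranteeing that any graph on $R$ vertices contains either a clique of size $t$ or an independent set of size $q$. Then I would define
\[
Z(s,t,q) \;:=\; Y\!\left(s,\, 2R\right),
\]
where $Y$ is the function from Theorem~\ref{thm:main2}.

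Given a graph $G$ with a path of length at least $Z(s,t,q)$, Theorem~\ref{thm:main2} applied with parameters $s$ and $q' = 2R$ yields two possibilities. In the first case $G$ contains $P_s$ as an induced subgraph, and we are done. In the second case $G$ contains a biclique $K_{\lfloor q'/2\rfloor,\lceil q'/2\rceil} = K_{R,R}$ as a (not necessarily induced) subgraph, i.e.\ two disjoint vertex sets $A, B$ with $|A|=|B|=R$ such that every vertex of $A$ is adjacent to every vertex of $B$.

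Now I would apply Ramsey's theorem inside $A$ and inside $B$ separately. Since $|A| = R = R_2(t,q)$, the induced subgraph $G[A]$ contains either a clique of size $t$ or an independent set of size $q$; the same holds for $G[B]$. If either $G[A]$ or $G[B]$ contains a clique of size $t$, then $G$ contains $K_t$ as an induced subgraph and we are done. Otherwise, we can pick an independent set $A' \subseteq A$ with $|A'| = q$ and an independent set $B' \subseteq B$ with $|B'|=q$; since all $|A'|\cdot|B'|$ edges between $A$ and $B$ are present and there are no edges inside $A'$ or $B'$, the set $A' \cup B'$ induces a $K_{q,q}$ in $G$.

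There is no real obstacle here: the combinatorial content is already contained in Theorem~\ref{thm:main2}, and the Ramsey step is standard. The only thing to watch is the floor/ceiling in Theorem~\ref{thm:main2}, which is why I take $q' = 2R$ rather than $R$, ensuring $\lfloor q'/2 \rfloor \geq R$ on both sides of the biclique.
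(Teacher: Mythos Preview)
Your proposal is correct and follows exactly the route the paper takes: the paper simply remarks that ``a large biclique gives rise either to a large induced biclique or a large clique'' and restates Theorem~\ref{thm:main2} accordingly, and your Ramsey clean-up inside each side of the non-induced $K_{R,R}$ is precisely the standard way to justify that remark.
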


It turns out that Theorem \ref{thm:main} is sufficient to establish the main claim
of the paper if we confine ourselves to \emph{finitely defined} classes of graphs,
i.e. those defined by forbidding finitely many induced subgraphs. Indeed,
a finitely defined class $X$ is well-quasi-ordered by the induced subgraph relation only if a path $P_s$ for some $s$ is forbidden for $X$, 
since otherwise the class contains infinitely many cycles, i.e. an infinite antichain. 
Therefore, by Theorem \ref{thm:main}, if graphs in $X$ are $(K_t,K_{q,q})$-free,
then they do not contain $P_{Z}$ as a (not necessarily induced) subgraph with $Z=Z(s,t,q)$.

On the other hand, it is well-known \cite{Fellows89} that large treewidth of a graph implies the existence of a large path. 
Put it differently, a bound on the length of a path implies a bound on treewidth. Since we know that 
in a finitely defined class, well-quasi-ordered by the induced subgraph relation, the path length is bounded, we conclude that the treewidth is
bounded as well. This gives us the following corollary.

\begin{corollary} \label{col:aux}
Let $X$ be a hereditary subclass of $(K_t,K_{q,q})$-free graphs defined by a finite collection 
of forbidden induced subgraphs. If $X$ is well-quasi-ordered by the induced subgraph relation, 
then $X$ is of bounded treewidth.
\end{corollary}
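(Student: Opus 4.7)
The plan is to follow the three-step outline sketched just before the corollary: (i) use well-quasi-orderability and the finiteness of the forbidden set to deduce that some induced path $P_s$ is excluded from $X$; (ii) apply Theorem~\ref{thm:main} to conclude that no graph in $X$ contains a (not necessarily induced) path of length $Z(s,t,q)$; and (iii) invoke the classical fact from \cite{Fellows89} that a bound on the longest path in a graph implies a bound on its treewidth.

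For step (i), I would argue as follows. By Claim~\ref{claim:antichain} the set of holes $\{C_4, C_5, C_6, \ldots\}$ forms an infinite antichain under the induced subgraph relation, so a wqo class $X$ can contain only finitely many of them. Hence for every sufficiently large $n$ the cycle $C_n$ lies outside $X$, so some graph $H_n$ from the finite forbidden set $M$ occurs in $C_n$ as an induced subgraph. By the pigeonhole principle a single $H\in M$ must arise as an induced subgraph of infinitely many $C_n$; since every proper induced subgraph of a cycle is a disjoint union of paths, this $H$ is itself a disjoint union of paths. But a disjoint union of paths is an induced subgraph of any sufficiently long path $P_s$, so $P_s\notin X$ for some such $s$.

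Steps (ii) and (iii) are then essentially immediate. Because $X$ is $K_t$-free, $K_{q,q}$-free, and $P_s$-free (as induced subgraphs), Theorem~\ref{thm:main} guarantees that no graph in $X$ contains $P_Z$ as a subgraph, with $Z=Z(s,t,q)$. A graph whose longest path has fewer than $Z$ vertices has treewidth bounded by a function of $Z$ alone (see \cite{Fellows89}), so the treewidth of every graph in $X$ is at most a constant depending only on $s$, $t$, and $q$. The main obstacle, in my view, is step (i): it is the only place where the wqo hypothesis and the finiteness of $M$ are simultaneously invoked, and it relies on the slightly delicate observation that the nontrivial induced subgraphs of a cycle are exactly the disjoint unions of paths. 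Steps (ii) and (iii) are routine applications of tools already assembled in the excerpt.
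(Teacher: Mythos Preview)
Your proposal is correct and follows exactly the three-step argument the paper sketches in the paragraph preceding the corollary. Your handling of step~(i) is in fact more detailed than the paper's one-line justification (``otherwise the class contains infinitely many cycles''): you spell out the pigeonhole argument showing that some member of the finite forbidden set must be a linear forest and hence an induced subgraph of a long $P_s$, which is the content the paper leaves implicit.
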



\section{Main result}
\label{sec:wqo}


The arguments given to justify Corollary~\ref{col:aux} are not applicable to 
hereditary classes defined by infinitely many forbidden induced subgraphs,
because in this case well-quasi-orderability does not necessarily imply 
a bound on the length of a path. Indeed, consider for instance the class
of $(K_{1,3},C_3,C_4,C_5,\ldots)$-free graphs. It consists of linear forests, 
i.e. graphs every connected component of which is a path. This class is well-quasi-ordered
by the induced subgraph relation, but the path length is not bounded in this class.
In order to address this more general situation, in this section we prove the following 
theorem which is the main result of the paper. 

\begin{theorem}\label{thm:main-wqo}
If $X$ is a hereditary subclass of $(K_t,K_{q,q})$-free graphs which is well-quasi-ordered by the induced subgraph relation,
then $X$ has a bounded treewidth. 
\end{theorem}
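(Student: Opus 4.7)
The strategy is contrapositive: assume $X$ has unbounded treewidth and produce an infinite antichain inside $X$, contradicting well-quasi-orderability. By Claim \ref{claim:antichain} the set $\mathcal{C}$ of canonical graphs is already an antichain under the induced subgraph relation, so it suffices to show that $X$ contains canonical induced subgraphs of arbitrarily large order.

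The core combinatorial statement I would prove is the following: for every $s$, $t$, and $q$ there exists $W = W(s,t,q)$ such that every $(K_t, K_{q,q})$-free graph of treewidth at least $W$ contains, as an induced subgraph, a canonical graph of order at least $s$. Applied to the fixed parameters $t$, $q$ of the class $X$ and to any $G \in X$ of treewidth at least $W(s,t,q)$, this statement produces an induced canonical subgraph of order at least $s$ lying inside $X$; letting $s \to \infty$ yields the required infinite antichain in $X$.

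To prove the combinatorial statement, I would first invoke the classical result that large treewidth forces a long path (not necessarily induced) and then upgrade to a long \emph{induced} path $P = v_1 v_2 \ldots v_L$ by applying Theorem \ref{thm:main} with the $(K_t, K_{q,q})$-free hypothesis of $G$. The next step is to analyse how the remaining vertices of $G$ attach to $P$; the two families of canonical graphs correspond to the two principal outcomes. If some path in $G - \mathrm{int}(P)$ joins $v_i$ to $v_j$ with $|i-j|$ large and creates no chord of $P$, then this path together with the segment $v_i v_{i+1} \ldots v_j$ induces a large hole. Otherwise, external attachments are so localised that one can isolate a long sub-path of $P$ whose two ends each receive private neighbours in a configuration realising the wings of $H_i$, $H'_i$, or $H''_i$, giving an induced $H$-graph of the required order.

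The main obstacle is this second outcome. Extracting a clean induced $H$-graph from the potentially dense and tangled attachment structure around $P$ requires repeated Ramsey-style cleaning (in the spirit of Lemma \ref{lem:grid}) combined with the $K_{q,q}$-freeness to rule out dense bipartite attachments, and the $K_t$-freeness to control cliques among neighbourhoods. Simultaneously one must keep the body of the putative $H$-graph chord-free, keep the wings vertex-disjoint and private, and avoid creating short holes or forbidden complete subgraphs that would force us back into the first case with too small a parameter. Orchestrating these Ramsey reductions so that the numerical parameters match up at the end is what I expect to be the technically hardest part of the argument; once it is carried out, the contradiction with well-quasi-orderability is immediate.
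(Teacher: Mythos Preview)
Your overall strategy and your formulation of the core combinatorial claim match the paper exactly: Theorem~\ref{thm:prefinal} is precisely your $W(s,t,q)$ statement, and Section~\ref{sec:summary} derives Theorem~\ref{thm:main-wqo} from it by the same contrapositive argument. The gap is in your sketch of how to \emph{prove} that core claim. The chain ``large treewidth $\to$ long path $\to$ long induced path'' followed by an analysis of external attachments to that path does not work, because a long induced path carries no information about what lies outside it. Take $G$ to be the disjoint union of a long chordless path and a large grid (or a connected variant in which all attachments to the path cluster near one end): such a $G$ is $(K_t,K_{q,q})$-free, has large treewidth, and has a long induced path, yet no hole and no $H$-graph can be built around that path. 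Your ``second outcome'' never gets off the ground because nothing guarantees the existence of private neighbours at \emph{both} ends of a long chordless sub-path.

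What the paper retains, and your reduction discards, is the two-dimensional structure coming from the grid-minor theorem. Lemma~\ref{lem:tw-rake} extracts from large treewidth a $k$-rake subgraph: a long path \emph{together with $k$ teeth spread along it}. These teeth are the guaranteed raw material for the wings of an $H$-graph, present before any cleaning begins. Lemmas~\ref{lem:to-dense-rake} and~\ref{lem:dense} then make the rake dense, apply Theorem~\ref{thm:main} to make its \emph{base} induced, and finally run the Ramsey/pigeonhole arguments to select two well-placed teeth as wings (or to produce a hole or a biclique). So Theorem~\ref{thm:main} is indeed used, but on the base of a rake whose teeth are already supplied, not on a bare long path; without a device like the rake, there is no mechanism to ensure that the attachments your argument relies on exist at all.
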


To prove the theorem, we will show that a large treewidth combined with the absence of large bicliques 
implies the existence of a large induced canonical graph, which is a much richer structural consequence 
than just the existence of a long induced path. An important part of showing the existence of a large
canonical graph is verifying that its body (see Section~\ref{sec:basic} for the terminology) is induced.
This will be done by application of Theorem~\ref{thm:main}. 

A plan of the proof of Theorem~\ref{thm:main-wqo} is outlined in Section~\ref{sec:plan}.
Sections~\ref{sec:tw-rake},~\ref{sec:from},~\ref{sec:dense},~\ref{sec:ltw},~\ref{sec:summary}
contain various parts of the proof.

\subsection{Plan of the proof}
\label{sec:plan}
To prove Theorem~\ref{thm:main-wqo} we will show that graphs of arbitrarily large tree-width 
contain either arbitrarily large bicliques as subgraphs or arbitrarily large canonical graphs
as induced subgraphs. The main notion in our proof is that of a {\it rake-graph}.

A {\it rake-graph} (or simply a {\it rake}) consists of a chordless path,
the {\it base} of the rake, and a number of pendant vertices, called {\it teeth},
each having a private neighbour on the base. The only neighbour of a tooth on
the base will be called the {\it root} of the tooth, and a rake with $k$ teeth will be 
called a $k$-rake. We will say that a rake is $\ell$-{\it dense} if any $\ell$ consecutive 
vertices of the base contain at least one root vertex. An example of a 1-dense 9-rake is 
given in Figure~\ref{fig:rake}.
     
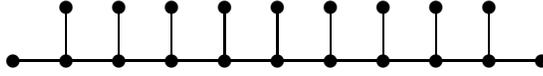
\begin{figure}[ht]
\begin{center}
\begin{picture}(200,50)
\put(0,0){\circle*{5}}
\put(20,0){\circle*{5}}
\put(40,0){\circle*{5}}
\put(60,0){\circle*{5}}
\put(80,0){\circle*{5}}
\put(100,0){\circle*{5}}
\put(120,0){\circle*{5}}
\put(140,0){\circle*{5}}
\put(160,0){\circle*{5}}
\put(180,0){\circle*{5}}
\put(200,0){\circle*{5}}

\put(20,20){\circle*{5}}
\put(40,20){\circle*{5}}
\put(60,20){\circle*{5}}
\put(80,20){\circle*{5}}
\put(100,20){\circle*{5}}
\put(120,20){\circle*{5}}
\put(140,20){\circle*{5}}
\put(160,20){\circle*{5}}
\put(180,20){\circle*{5}}

\put(0,0){\line(1,0){20}}
\put(20,0){\line(1,0){20}}
\put(40,0){\line(1,0){20}}
\put(60,0){\line(1,0){20}}
\put(80,0){\line(1,0){20}}
\put(100,0){\line(1,0){20}}
\put(120,0){\line(1,0){20}}
\put(140,0){\line(1,0){20}}
\put(160,0){\line(1,0){20}}
\put(180,0){\line(1,0){20}}

\put(20,0){\line(0,1){20}}
\put(40,0){\line(0,1){20}}
\put(60,0){\line(0,1){20}}

\put(80,0){\line(0,1){20}}
\put(100,0){\line(0,1){20}}
\put(120,00){\line(0,1){20}}

\put(140,0){\line(0,1){20}}
\put(160,0){\line(0,1){20}}
\put(180,0){\line(0,1){20}}

\end{picture}
\end{center}
\caption{1-dense 9-rake}
\label{fig:rake}
\end{figure}

\medskip  
We will prove Theorem~\ref{thm:main-wqo} through a number of intermediate steps as follows.
\begin{itemize}
\item[1.] In Section~\ref{sec:tw-rake}, we observe that any graph of large tree-width 
contains a rake with many teeth as a subgraph.
\item[2.] In Section~\ref{sec:from} we show that any graph containing 
a rake with many teeth as a subgraph contains either 
\begin{itemize}
\item a {\it dense} rake with many teeth as a subgraph or 
\item a large canonical graph as an {\it induced} subgraph.
\end{itemize}
\item[3.] In Section~\ref{sec:dense} we prove that dense rake
subgraphs necessarily imply either   
\begin{itemize}
\item a large canonical graph as an {\it induced} subgraph or
\item a large biclique as a subgraph.
\end{itemize}
\item[4.] In Section~\ref{sec:ltw}, we summarize the results of the previous sections to show that 
any graph of large tree-width contains 
either   
\begin{itemize}
\item a large canonical graph as an {\it induced} subgraph or
\item a large biclique as a subgraph.
\end{itemize}
\item[5.] In Section~\ref{sec:summary}, we use the result of Step 4 to prove Theorem~\ref{thm:main-wqo}. 
\end{itemize}

\subsection{Rake subgraphs in graphs of large tree-width}
\label{sec:tw-rake}

\begin{lemma}\label{lem:tw-rake}
For any natural $k$, there is a number $f(k)$ such that every graph 
of tree-width at least $f(k)$ contains a $k$-rake as a subgraph.
\end{lemma}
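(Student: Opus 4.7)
The plan is to derive the lemma from the Excluded Grid Theorem of Robertson and Seymour. That theorem guarantees, for every $w$, a function value $g(w)$ such that any graph with tree-width at least $g(w)$ contains the $w\times w$ grid as a minor. I would take $f(k):=g(k')$ for a sufficiently large $k'$ depending on $k$, so that the graph $G$ in the hypothesis of the lemma contains a large grid minor. Using that the relevant scaffold has maximum degree three, one can then (by a routine conversion) upgrade the grid minor to a \emph{subdivision} of a large wall sitting inside $G$ as a subgraph.

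From such a wall subdivision the $k$-rake drops out almost by inspection. The bottom row of the wall, after subdivision, becomes a path $B'\subseteq G$; the wall attaches $k$ internally disjoint subdivided ``branches'' to $B'$ at $k$ distinct vertices $u_1,\dots,u_k$. For each $i$ let $t_i$ be the first internal vertex of the subdivided branch leaving $u_i$; then $t_iu_i\in E(G)$, the vertices $t_1,\dots,t_k$ are pairwise distinct and lie off $B'$ (because the branches are internally disjoint and leave the bottom row), and the attachment points $u_i$ are distinct vertices of $B'$. Keeping precisely the edges of $B'$ together with the $k$ edges $t_iu_i$ yields a subgraph of $G$ isomorphic to a $k$-rake, since a subgraph embedding of the rake requires only the listed edges of the base path and of the pendants to be present in $G$, with no constraint on additional edges of $G$ between these vertices.

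The main obstacle is quantitative rather than conceptual: the Excluded Grid Theorem gives an astronomical value of $f(k)$, which is harmless for the purely existential claim of the lemma but feels heavy-handed. A more elementary route would start from the classical fact, already cited in the introduction, that large tree-width forces a long path in $G$, and then build up teeth one at a time by induction on $k$, exploiting that a graph admitting no further tooth on a longest path would admit a small balanced separator and hence have small tree-width. The delicate point in that elementary route is arranging for the external vertices supplying the teeth to attach to \emph{distinct} interior vertices of the base; this difficulty is precisely what the wall/grid scaffold removes for free, which is why I would prefer the grid-based proof unless an explicit bound on $f(k)$ were needed.
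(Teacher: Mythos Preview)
Your proposal is correct and follows essentially the same route as the paper: both invoke the Excluded Grid Theorem and then extract a $k$-rake subgraph from the resulting minor. The only cosmetic difference is that you pass through a wall subdivision (via the standard ``max degree $\le 3$ implies minor $=$ topological minor'' fact), whereas the paper observes directly that the $k$-rake is a minor of the grid and then builds the rake subgraph by hand from the branch sets $V_i,V_i'$ --- effectively unwinding that same topological-minor conversion.
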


\begin{proof}  
A $k\times k$-grid is a graph with vertices $v_{i,j}$ $1\le i,j,\le k$ 
and edges between $v_{i,j}$ and $v_{i',j'}$ if and only if $|i-i'|+|j-j'|=1$. 
In \cite{RST94}, the authors proved that for each $k$ there 
is a function $f(k)$ such that every graph $G$ of tree-width at least $f(k)$ has 
a $k\times k$-grid as a minor.

Consequently, any graph $G$ of tree-width at least $f(k)$ contains a $k$-rake as a minor.
It follows that the graph $G$ contains a subgraph $H$ from which a $k$-rake can be obtained by contraction operations only.
We deduce that $G$ contains a subgraph $H$, whose vertices admit a partition $V(H)=\cup_{i=1}^k V_i \cup_{i=1}^k V'_i$ into disjoint subsets $V_i$ and $V'_i$ such that $G[V_i]$ and $G[V_i']$ are connected for each $i \in \{1,2, \ldots, k\}$, there is at least one edge with endpoints in both $V_i$ and $V_{i+1}$ for each $i=\{1,2, \ldots, k-1\}$ and there is at least one edge with endpoints in both $V_i$ and $V'_i$ for each $i=\{1,2, \ldots, k\}$.



To finish the proof we show that the graph $H$ contains a $k$-rake as a subgraph. First, for each $i=\{1, 2, \ldots, k-1\}$, let $x_iy_{i+1}$ be an edge with $x_i \in V_i$ and $y_{i+1} \in V_{i+1}$. Then, for each $i=\{2, 3, \ldots, k-1\}$, as $G[V_i]$ is connected,  we can find a path $P_i$ in $G[V_i]$ connecting $y_i$ and $x_i$. We also define $P_1=\{x_1\}$ and $P_k=\{y_k\}$. These paths will constitute the base of the rake and one can attach tooth $t_i$ with root in $P_i$ as follows. If $V(P_i)=V_i$, let $t_i$ be a point in $V'_i$ which is adjacent
to some point in $V_i$. Otherwise, if $V(P_i) \neq V_i$, let $t_i$ be a point in $V_i \backslash V(P_i)$ which has a neighbour in $V(P_i)$ (possible as $G[V_i]$ is connected). Thus $H$, and hence $G$, contains as a subgraph a $k$-rake with base $P_1 \cup P_2 \cup \ldots \cup P_k$ and teeth $\{t_1, t_2, \ldots, t_k\}$.

\end{proof}

\subsection{From rake subgraphs to dense rake subgraphs}
\label{sec:from}

The main result of this section is Lemma~\ref{lem:to-dense-rake} below. Its proof is based 
on the following auxiliary result.

\begin{lemma}\label{lem:shorten}
Let $G$ be a graph containing an $H$-graph $H^*$ (possibly tight or semi-tight) as a subgraph
with the body being induced (i.e. chordless), and let $s\ge 2$ an integer. Then 
\begin{itemize}
\item[(1)] either $G$ contains a path of length $t\in \{2,\ldots,s+1\}$ connecting a left wing of $H^*$ to 
its right wing with all intermediate vertices lying in the body,
\item[(2)]  or $G$ contains an induced canonical subgraph of order at least $s$.  
\end{itemize}
\end{lemma}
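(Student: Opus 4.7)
The plan is to suppose that (1) fails and construct the induced canonical subgraph demanded by (2). Write the body of $H^*$ as $b_1,\ldots,b_m$ and, for each wing $w$, put $A_w:=N_G(w)\cap\{b_1,\ldots,b_m\}$. A first reduction handles the short-body case: if $m\le s$ then the default path $w_L,b_1,\ldots,b_m,w_R$ already has length $m+1\in\{2,\ldots,s+1\}$ and so realises (1), so I may assume $m\ge s+1$.

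From the failure of (1) I read off two structural facts. First, for every left wing $w_L$, right wing $w_R$ and every $j\in A_{w_L}$, $j'\in A_{w_R}$, the inequality $|j-j'|\ge s$ must hold, since otherwise the path $w_L,b_j,\ldots,b_{j'},w_R$ of length $|j-j'|+2$ would give (1). Second, no wing-wing edge $w_Lw_R$ can exist: combined with the body and the default roots such an edge would produce a chordless cycle of length $m+2\ge s+3$, an induced hole already satisfying (2).

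Next I take $P^*=w_L,b_{j_L},b_{j_L+1},\ldots,b_{j_R},w_R$ to be the \emph{globally shortest} body-intermediate wing-to-wing path; by (1) failing, $j_R-j_L\ge s$, and by minimality $P^*$ is induced. In the degenerate case $(j_L,j_R)=(1,m)$, $P^*$ is the default path; its being globally shortest forces $A_w=\{\text{root}\}$ for every wing $w$ (any body shortcut would produce a strictly shorter path), so $H^*$ itself (possibly promoted to semi-tight or tight by same-side wing-wing edges) is an induced canonical subgraph of order $m\ge s+1$. Otherwise, I extend $V(P^*)$ by picking a second pendant $x_L$ at the left endpoint and $x_R$ at the right endpoint of the body: $x_L=b_{j_L-1}$ when $j_L>1$ and $x_L=w_L'$ (the other left wing of $H^*$) when $j_L=1$; symmetrically $x_R=b_{j_R+1}$ when $j_R<m$ and $x_R=w_R'$ when $j_R=m$. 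Then $V':=V(P^*)\cup\{x_L,x_R\}$ should induce an $H$-graph (non-tight, semi-tight, or tight according to whether $w_Lx_L$ and $w_Rx_R$ are edges) with body $b_{j_L},\ldots,b_{j_R}$ of order $j_R-j_L+1\ge s+1$, giving (2).

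The hardest part will be verifying the induced-ness of $G[V']$. Each candidate chord has to be ruled out either by minimality of $P^*$ (which forbids shortcuts of $w_L$ or $w_R$ into $\{b_{j_L+1},\ldots,b_{j_R}\}$ or $\{b_{j_L},\ldots,b_{j_R-1}\}$, respectively) or by the distance constraint $|j-j'|\ge s$: for instance, a potential chord $w_Rb_{j_L-1}$ would force $|j_L-1-j_L|=1<s$, a contradiction; a shortcut $w_L'b_i$ with $2\le i\le j_R$ would combine with $w_R$ through the body to give a wing-to-wing path of length $|i-j_R|+2\le s+1$, contradicting the failure of (1). The hypothesis $s\ge 2$ is used precisely to make length-$3$ shortcut paths lie in $\{2,\ldots,s+1\}$. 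The trickiest subcases will be those in which the secondary wings $w_L'$ or $w_R'$ have their own shortcuts interacting with the body subpath; there I plan either to extract an induced hole from a short cycle formed by two consecutive shortcuts of the secondary wing, or to replace the secondary pendant by a more carefully chosen vertex so that the resulting $H$-graph is again induced.
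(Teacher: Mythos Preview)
Your approach is essentially the paper's: take a globally shortest left-wing--to--right-wing path through the body, then attach two pendants (the preceding body vertex or the other wing on each side) and verify the result is an induced canonical graph. Two remarks.

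First, your ``second structural fact'' is not justified as written: the cycle $w_L,b_1,\ldots,b_m,w_R,w_L$ through the \emph{default} roots need not be chordless, since $w_L$ or $w_R$ may have further body neighbours. The correct argument uses $P^*$ itself: if $w_Lw_R\in E(G)$, then closing $P^*$ with this edge gives a cycle of length $j_R-j_L+3\ge s+3$, and it is chordless precisely because minimality of $P^*$ forces $b_{j_L}$ and $b_{j_R}$ to be the \emph{only} body neighbours of $w_L$ and $w_R$ on it. This is exactly how the paper handles that edge (as one of four edge-checks at the end).

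Second, the ``trickiest subcases'' you anticipate never occur, thanks to the global minimality you have already set up. If $j_L=1$ and the secondary wing $w_L'$ had a neighbour $b_i$ with $1<i\le j_R$, then $w_L',b_i,\ldots,b_{j_R},w_R$ would be strictly shorter than $P^*$; if $w_L'$ were adjacent to $x_R\in\{b_{j_R+1},w_R'\}$ or to $w_R$, you again get a shorter wing-to-wing body-path (or fall into the hole case above). So no auxiliary hole-extraction or pendant-replacement is needed. The paper's proof absorbs all of this by minimizing $t$ over all triples $(w',w'',U')$ and then simply checking the four cross edges $w'w''$, $w'u_{i+t+1}$, $w''u_{i-1}$, $u_{i-1}u_{i+t+1}$; your separate short-body and degenerate cases are likewise subsumed.
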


\begin{proof}
Let $w'$ be a left wing and $w''$ be a right wing of $H^*$ and $U=\{u_1,\ldots,u_q\}$ be its body. 
Since $w'$ is adjacent to $u_1$ and $w''$ is adjacent to $u_q$, there must exist a sub-path $U'=\{u_i,\ldots,u_{i+t}\}$
of $U$ such that $u_i$ is the only neighbour of $w'$ in $U'$ and $u_{i+t}$ is the only neighbour of $w''$ in $U'$.
We assume that $w',w'',U'$ are chosen so that $t$ (the length of the path $U'$) is as small as possible. This implies, in particular,
that no left wing has a neighbour in $U'$ other than $u_i$ and no right wing has a neighbour in $U'$ other than $u_{i+t}$. 

Assume now $t\ge s$. If $i=1$, we define $u_{i-1}$ to be the left wing different from $w'$, 
and if $i+t=q$, we define $u_{i+t+1}$ to be the right wing different from $w''$. If
$w'$ is adjacent to $w''$ or $w'$ is adjacent to $u_{i+t+1}$ or $w''$ is adjacent to $u_{i-1}$
or $u_{i-1}$ is adjacent to $u_{i+t+1}$,
then a chordless cycle of length at least $s+1$ arises. Otherwise, the vertices $w',w'',u_{i-1},u_i,\ldots,u_{i+t},u_{i+t+1}$
induce  a canonical graph of order at least $s$.
\end{proof}

\begin{lemma}\label{lem:to-dense-rake}
Let $k$ and $s$ be natural numbers. Every graph containing a $k+2$-rake as a subgraph contains 
either 
\begin{itemize}
\item an $s+5$-dense $k$-rake a subgraph or 
\item a canonical graph of order at least $s$ as an induced subgraph.
\end{itemize}
\end{lemma}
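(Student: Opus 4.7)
The plan is to distinguish two cases based on the gap structure of the given $(k+2)$-rake and to fall back on Lemma~\ref{lem:shorten} when a dense subrake cannot be extracted. Let $R$ be the rake with base $B=b_1\cdots b_m$ and teeth $t_1,\ldots,t_{k+2}$ rooted at $r_1<r_2<\cdots<r_{k+2}$ along $B$, and write $g_i$ for the number of base-edges between $r_i$ and $r_{i+1}$. It is convenient to choose, among all $(k+2)$-rakes in $G$, one of smallest total base length; this minimality will rule out certain base-chords below.

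If $g_i\le s+5$ for every $i$, then dropping the two extreme teeth $t_1$ and $t_{k+2}$ and truncating the base to the segment from $r_2$ to $r_{k+1}$ yields a $k$-rake whose maximum root-gap is still at most $s+5$ and whose first and last base vertices are roots; every window of $s+5$ consecutive base vertices therefore contains a root, so the resulting $k$-rake is $(s+5)$-dense and we are done. This is where having two extra teeth in the original rake is first exploited.

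Otherwise, some $g_j>s+5$, so the base subpath $P$ from $r_j$ to $r_{j+1}$ has more than $s+6$ vertices. The strategy is to build an $H$-graph (possibly tight or semi-tight) with $P$ as its body and, as wings at $r_j$, the tooth $t_j$ together with the base vertex immediately preceding $r_j$ on $B$; symmetrically at $r_{j+1}$. When $r_j$ or $r_{j+1}$ is an extreme vertex of $B$ (so the outer base-neighbour is missing), the spare teeth $t_{j-1}$ or $t_{j+2}$ serve as substitute wings — a second role for the two extra teeth in the $(k+2)$-rake. The minimality of the base guarantees that $P$ is induced, so the $H$-graph has induced body. Lemma~\ref{lem:shorten} then delivers either the desired canonical induced subgraph of order at least $s$, or a short wing-to-wing path of length at most $s+1$ through the body; in the latter case, combining this short path with the portion of $P$ not used by it (non-empty because $|P|>s+5$) produces a long chordless cycle, i.e.\ a hole, which is a canonical induced subgraph of order at least $s$.

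The main obstacle is the careful case analysis of the extra $G$-edges between rake vertices — chords in $P$, adjacencies between teeth and internal base vertices, or adjacencies between the two wings at the same end — each of which can destroy the clean $H$-graph picture. Minimality of the rake eliminates some of these (any chord in $P$ would let us shortcut the rake to a smaller $(k+2)$-rake, contradicting minimality), while the remaining ones I expect to convert either into a tight/semi-tight variant of the $H$-graph or directly into a hole of the right length. The ``$+5$'' slack in the density bound is precisely what gives the body room to absorb these local modifications and still yield a canonical induced subgraph of order at least $s$.
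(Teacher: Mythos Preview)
Your plan diverges from the paper's. The paper does not case-split on gap sizes; instead, after replacing each inter-root segment of the base by a shortest (hence chordless) path in $G$, it applies Lemma~\ref{lem:shorten} to the $H$-graph over \emph{every} pair of consecutive roots. If some application yields outcome~(2) we are done; otherwise each yields a short wing-to-wing path, and the heart of the argument is to \emph{glue} all these short paths end to end into the base of a new $k$-rake, with a case analysis at each junction deciding which vertex becomes the new root and which the new tooth. The ``$+5$'' is spent entirely on this gluing overhead ($s{+}1$ from the lemma, up to $2$ for normalising endpoints, up to $2$ more for root reassignment), and the two spare teeth are used only to normalise the ends of the rake beforehand, not as substitute wings.

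Your Case~2 has a genuine gap. When Lemma~\ref{lem:shorten} returns outcome~(1), the short path has the form $w',u_i,u_{i+1},\dots,u_{i+t'},w''$ with the $u$'s a contiguous block of the body. You assert that combining this with the unused part of $P$ gives a long hole, but no such cycle is available in general: the two pieces do not close up, and the obvious candidates (for instance $w',u_1,\dots,u_i,w'$ when $i>1$) have length governed by $i$ or by $q-i-t'$, either of which can be as small as $2$ or $3$ even when $q>s+6$. Your base-minimality does bound how far a \emph{tooth} wing $t_j$ can reach into the body (reroute the base through $t_j$ and reinstate a tooth from the skipped segment), but it does not obviously control the base-neighbour wing $r_j-1$ when that vertex is itself a root: the rerouting then drops a tooth and no longer produces a $(k{+}2)$-rake, so minimality is not contradicted. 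Hence outcome~(1) is not excluded, and the step ``short path $\Rightarrow$ long hole'' is where your argument breaks.
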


\begin{proof}
Consider a graph $G$ containing a $k$-rake $R$ as a subgraph.
For our construction it is essential that the second and second last vertices of the base of $R$
are roots while the first and the last vertices are not. To establish this condition we remove the teeth
whose roots are the first or the last vertices and possibly shorten the base so that it would start just
before the second root and end just after the second last root. This is where $k+2$ comes from.
After this preprocessing, we proceed as follows.

First, we transform any path between any two consecutive root vertices into a shortest, and hence a chordless, path
by cutting along any possible chords. Now any two consecutive root vertices together with their teeth, with the path
connecting them and with two other their neighbours in the base of $R$ form an $H$-graph satisfying conditions
of Lemma~\ref{lem:shorten}. If one of these $H$-graphs contains an induced canonical subgraph of order at least $s$,
the lemma is proved. Therefore, we assume that the wings of each of these graphs are connected by 
a short path as in (2) of Lemma~\ref{lem:shorten}. We now concatenate (glue) all these paths into the base of a new rake as follows.

Consider three consecutive vertices $u_{i-1},u_i,u_{i+1}$ in the base of $R$ with $u_i$ being a root vertex but \emph{not the first one}.
Let $v_i$ be the tooth of $u_i$. Also, denote by $P^l$ a short path connecting 
two wings of the $H$-graph on the left of $u_i$, and by $P^r$ a respective short path in the $H$-graph on the right of $u_i$. 
To simplify the discussion, we will assume that if $P^r$ starts at $u_{i-1}$, then its next vertex is neither $u_i$ nor $u_{i+1}$,
since otherwise we can transform $P^r$ by starting it at $v_i$, which will increase the length of the path by at most 1.
Also, we will assume that if $P^r$ starts at $v_i$, then its next vertex is not $u_{i+1}$,
since otherwise we can transform $P^r$ by adding $u_i$ between $v_i$ and $u_{i+1}$, which will increase the length of the path by at most 1.
We apply similar (symmetric) assumptions with respect to $P^l$. With these assumptions in mind, we now do the following. 
\begin{itemize} 
\item If both $P^l$ and $P^r$ contain $u_i$, then both of them start at $v_i$ (according to the above assumption).
In this case, we glue the two paths at $u_i$, define it to be a root vertex in the new rake and define $v_i$ to be its tooth.  
\item Assume that, say, $P^l$ contains $u_i$ (implying it contains $v_{i}$), while $P^r$ does not.
      Assume in addition that $P_l$ contains $u_{i-1}$. 
\begin{itemize}
\item If $P^r$ starts at $u_{i-1}$, 
then we glue the two paths at $u_{i-1}$ (by cutting $u_i$ and $v_i$ off $P^l$), define $u_{i-1}$ 
to be a root vertex and $u_i$ to be its tooth in the new rake. 
\item If $P^r$ starts at $v_{i}$, then we glue the two paths at $v_{i}$, define $u_{i}$ 
to be a root vertex and $u_{i+1}$ to be its tooth in the new rake. 
\end{itemize}
\item The same as in the previous case with the only difference that $P^l$ does not contain $u_{i-1}$,
\begin{itemize}
\item If $P^r$ starts at $u_{i-1}$, 
then we replace $v_i$ by $u_{i-1}$ in $P^l$,
glue the two paths at $u_{i-1}$, define $u_{i}$ 
to be a root vertex and $v_i$ to be its tooth in the new rake. 
\item If $P^r$ starts at $v_{i}$, then (like in the previous case) we glue the two paths at $v_{i}$, define $u_{i}$ 
to be a root vertex and $u_{i+1}$ to be its tooth in the new rake. 
\end{itemize}
\item Assume that neither $P^l$ nor $P^r$ contains $u_i$, then we distinguish between the following cases.
\begin{itemize}
\item If both paths start at $v_i$, then we glue them at $v_i$, define it to be a root vertex and $u_i$ its tooth in the new rake.
\item If one  of them, say $P^l$, starts at $v_i$, and the other one, that is $P^r$, starts at $u_{i-1}$, then we 
concatenate them by adding $u_i$ (which is adjacent to both $v_i$ and $u_{i-1}$),  define $u_i$ to be a root vertex and $u_{i+1}$
its tooth in the new rake.
\item If $P^l$ starts at $u_{i+1}$ $P^r$ starts at $u_{i-1}$, then we again  
concatenate them by adding $u_i$,  define $u_i$ to be a root vertex and $v_{i}$
its tooth in the new rake.
\end{itemize}
\end{itemize}

The procedure outlined above creates a new rake with $k$ teeth. The length of each
path used in the construction is initially at most $s+1$. In order to incorporate the assumptions
regarding $P^l$ and $P^r$ we increase them by at most $1$ on each end, so the resulting length is at most
$s+3$. Finally, the process of assignment of roots may require further increase by at most $1$ on each 
end. Hence, we conclude that the new rake is $s+5$-dense.
\end{proof}

\subsection{Dense rake subgraphs}
\label{sec:dense}

\begin{lemma}\label{lem:dense}
For every $s,q$ and $\ell$, there is a number $D=D(s,q,\ell)$ such that 
every graph containing an $\ell$-dense $D$-rake as a subgraph contains 
either 
\begin{itemize}
\item a canonical graph of order at least $s$ as an induced subgraph or
\item a biclique of order $q$ as a subgraph.
\end{itemize}
\end{lemma}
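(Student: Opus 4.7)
The plan is to reduce the rake, through a sequence of Ramsey-type arguments, to a highly structured collection of teeth from which an induced $H$-graph may be constructed explicitly. Throughout I may assume the base is chordless in $G$ itself; otherwise I replace it by a chordless path in $G$ between the same endpoints, inside which enough roots survive by $\ell$-density. Setting $K := 3\ell+1$, I partition the base into consecutive windows $W_1,\ldots,W_N$ of length $K$ each, with $N \geq D/K$. The middle sub-window of each $W_i$ has length $\ell+1$, so by $\ell$-density it contains at least one root, and I designate one such root as $r_i^*$ with tooth $t_i^*$, ensuring that the entire $\ell$-neighbourhood of $r_i^*$ along the base lies inside $W_i$. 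This yields an initial collection $\mathcal{T}_0 = \{t_1^*,\ldots,t_N^*\}$ of one tooth per window.

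Three successive Ramsey steps on $\mathcal{T}_0$ then clean up the structure. The first is a $2$-colouring of pairs by whether $t_i^* \sim t_j^*$ in $G$: the monochromatic clique case contains $K_{q,q}$ as a subgraph (by bipartitioning a $K_{2q}$), while the independent case yields a subset $\mathcal{T}_1$ of pairwise non-adjacent teeth. The second is a $4$-colouring on pairs $(i,j)$ with $i<j$ by $(\mathbf{1}[t_i^*\sim r_j^*],\mathbf{1}[t_j^*\sim r_i^*])$: any non-$(0,0)$ colour class gives $K_{q,q}$ directly (in colour $(1,0)$, for instance, the first $q$ teeth together with the last $q$ roots form a $K_{q,q}$), while the $(0,0)$ class produces $\mathcal{T}_2$ with no tooth adjacent to another chosen root. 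The third is a $4$-colouring on pairs by whether $t_i^*$ has a $G$-neighbour in $W_j$ and symmetrically: since $|W_j|=K$, iterated pigeonhole (applied $q$ times, one window at a time) converts any non-$(0,0)$ colour class into a $K_{q,q}$ subgraph, while the $(0,0)$ class leaves a subset $\mathcal{T}_3$ whose teeth all have their base-neighbours contained in their own window. A final pigeonhole over the $2^{2\ell+1}$ possible offset-patterns $D_i := \{k \in \{-\ell,\ldots,\ell\} : t_i^* \sim r_i^* + k\}$ (each containing $0$) extracts $M_4$ teeth with a common pattern $D$.

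Writing $d^+ := \max D$ and $d^- := \min D$, I pick the first and last of these $M_4$ teeth as $t_a^*$ and $t_b^*$ and consider the induced subgraph of $G$ on
\[
S := \{t_a^*,u_a\} \,\cup\, \{r_a^*+d^+,\,r_a^*+d^++1,\,\ldots,\,r_b^*+d^-\} \,\cup\, \{v_b, t_b^*\},
\]
where $u_a := r_a^*+d^+-1$ and $v_b := r_b^*+d^-+1$. The middle block of $S$ is a chordless path by chordlessness of the base; the Ramsey steps guarantee that $t_a^*$ has no $G$-neighbour in $S$ other than the body-endpoint $r_a^*+d^+$ and (exactly when $d^+-1 \in D$) the wing $u_a$, and symmetrically for $t_b^*$; while $u_a$ and $v_b$ have no extraneous $G$-neighbours in $S$ by chordlessness. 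The induced subgraph on $S$ is therefore an $H$-graph — loose, semi-tight, or tight according to which of $d^\pm \mp 1$ lies in $D$ — of order at least $(M_4-1)K-3\ell$, which exceeds $s$ for $M_4$ (and hence $D$) chosen large enough.

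The main obstacle is the third Ramsey step, where one must convert a monochromatic colour class recording tooth-to-window adjacencies into an actual $K_{q,q}$ subgraph. The iterated pigeonhole works because $|W_j|=K$ is bounded, so $q$ iterations produce $q$ teeth sharing $q$ common base-neighbours; the asymmetric colours $(1,0)$ and $(0,1)$ are handled by restricting to subcollections of teeth whose index positions are compatible with the ``adjacent direction''. A remaining subtlety is that windows $W_c$ for indices $c \in \mathcal{T}_0 \setminus \mathcal{T}_3$ lie between chosen teeth without being directly controlled by step three; this can be handled either by strengthening the Ramsey bookkeeping so that every window between chosen teeth is accounted for, or by applying Lemma~\ref{lem:shorten} to the constructed $H$-graph subgraph, which either already yields an induced canonical subgraph or else produces a short wing-to-wing path whose existence can be iterated to force additional $K_{q,q}$ structure.
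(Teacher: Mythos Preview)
Your argument has two genuine gaps, and both are precisely the places where the paper's proof invests real work.

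First, you cannot ``assume the base is chordless in $G$'' by replacing it with a chordless path between the same endpoints: such a path may consist of a single edge (if the endpoints happen to be adjacent in $G$), and in any case there is no reason for the original roots to lie on it, so the appeal to $\ell$-density is vacuous. Converting a long path into a long \emph{induced} path is exactly the content of Theorem~\ref{thm:main}, and the paper's proof begins by applying that theorem to the base, then uses a separate block argument to recover a rake structure (with new teeth) on the resulting induced path. Without this step your entire construction collapses, since the final ``induced $H$-graph'' need not be induced.

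Second, your third Ramsey step colours only pairs drawn from $\mathcal{T}_2$, so membership in the $(0,0)$ class guarantees that $t_a^*$ has no neighbour in $W_j$ only for $j\in\mathcal{T}_3$; windows whose indices were discarded during the Ramsey refinements are uncontrolled, yet the body of your $H$-graph runs straight through them. You flag this yourself, but neither suggested patch is a proof: ``strengthening the bookkeeping'' is not specified, and invoking Lemma~\ref{lem:shorten} in case~(1) only tells you the two teeth have nearby body-neighbours somewhere, which does not iterate to a $K_{q,q}$. The paper avoids this trap by never attempting to control all tooth--base adjacencies: it fixes a middle segment of $2sq$ consecutive base vertices, splits it into $2q$ pieces of size $s$, and uses pigeonhole to show that either many left teeth hit every left piece (yielding $K_{q,q}$) or some left tooth $t_a$ misses a piece $M_x$ entirely; symmetrically for a right tooth $t_b$ and $M_y$. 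The gaps at $M_x,M_y$ then force either a long induced cycle (if $t_a$ has neighbours on both sides of $M_x$) or an induced $H$-graph on $t_a,t_b$ and the stretch between their extremal neighbours.
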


\begin{proof}
To define the number $D=D(s,q,\ell)$, we introduce intermediate notations
as follows: $b:=2(q-1)s^q+2sq+4$ and $c:=R(2,2,\max(b,2q))$,
where $R$ is the Ramsey number. With these notations the number $D$ is defined
as follows: $D=D(s,q,\ell):=Z(\ell c^2,2q,q)$, where $Z$ is the 
number defined in Theorem~\ref{thm:main}. 

Consider a graph $G$ containing an $\ell$-dense $D$-rake $R^0$ as a subgraph.
The base of this rake is a path $P^0$ of length at least $D$ and hence,
by Theorem~\ref{thm:main}, the base contains either a biclique 
of order at least $q$ as a subgraph (in which case we are done) or 
an {\it induced} path $P$ of length at least $\ell c^2$. Let 
us call any (inclusionwise) maximal sequence of consecutive vertices of $P^0$
that belong to $P$ a {\it block}. Assume the number of blocks is 
more than $c$. 
Let $P'$ be the subpath of $P$ induced by the first $c$ blocks.
Let $w_1, \dots, w_c$ be the rightmost vertices of the blocks.
Let $v_1, \dots, v_c$ be the vertices such that each $v_i$ is the vertex of $P_0$
immediately following $w_i$. Then $P'$ together with $v_1, \dots, v_c$ create
a $c$-rake with $P'$ being the induced base, $v_1, \dots, v_c$ being the teeth and 
$w_1, \dots, w_c$ being the respective roots.
If the number of blocks is at most $c$, 
then $P^0$ must contain a block of size at least $\ell c$, in which case this block 
also forms an induced base of a $c$-rake (since $R^0$ is $\ell$-dense). 
We see that in either case $G$ has a $c$-rake with an induced base.  
According to the definition of $c$, the $c$ teeth of this rake induce a graph which has either 
a clique of size $2q$ (and hence a biclique of order $q$ in which case we are done),
or an independent set of size $b$. By ignoring the teeth outside this  set we obtain 
a $b$-rake $R$ with an induced base and with teeth forming an independent set.

Let us denote the base of $R$ by $U$, its vertices by $u_1,\ldots,u_m$ (in the order of their
appearances in the path), and the teeth of $R$ by $t_1,\ldots,t_b$ (following 
the order of their root vertices). 

Denote $r:=(q-1)s^q+2$ and consider two sets of teeth $T_1=\{t_2,t_3,\ldots,t_r\}$ and $T_2=\{t_{b-1},t_{b-2},\ldots, t_{b-r+1}\}$.
By definition of $r$ and $b$, there are $2sq$ other teeth between $t_r$ and $t_{b-r+1}$,
and hence there is a set $M$ of $2sq$ consecutive vertices of $U$ between the root of $t_r$ and 
the root of $t_{b-r+1}$. We partition 
$M$ into $2q$ subsets (of consecutive vertices of $U$) of size $s$ each 
and for $i=1,\ldots,2q$ denote the $i$-th subset by $M_i$. 

If each vertex of $T_1$ has a neighbour in each of the first $q$ sets $M_i$, then by the Pigeonhole Principle 
there is a biclique of order $q$ with $q$ vertices in $T_1$ and $q$ vertices in $M$ (which can be proved by
analogy with Lemma~\ref{lem:grid}). Similarly, a biclique of order $q$ arises 
if  each vertex of $T_2$ has a neighbour in each of the last $q$ sets $M_i$. 
Therefore, we assume that there are two vertices $t_a\in T_1$ and $t_b\in T_2$ and 
two sets $M_x$ and $M_y$ with $x<y$ such that $t_a$ has no neighbours in $M_x$, while 
$t_b$ has no neighbours in $M_y$. 

By definition, $t_a$ has a neighbour in $U$ (its root) on the left of $M_x$. If additionally
$t_a$ has a neighbour to the right of $M_x$, then a chordless cycle of length at least 
$s$ arises (since $|M_x|=s$ and $t_a$  has no neighbours in $M_x$), in which case the lemma is true.
This restricts us to the case, when all neighbours of $t_a$ in $U$ are located to the left of $M_x$.
By analogy, we assume that all neighbours of $t_b$ in $U$ are located to the right of $M_y$.
Let $u_i$ be the rightmost neighbour of $t_a$ in $U$ and $u_j$ be the leftmost neighbour of $t_b$ in $U$.
According to the above discussion, $i<j$ and $j-j>2s$. But then the vertices $t_a,t_b, u_{i-1}, u_i,\ldots,u_j,u_{j+1}$
induce an $H$-graph (possibly tight or semi-tight) of order more than $s$ (the existence of vertices $u_{i-1}$ and $u_{j+1}$
follows from the fact that $T_1$ does not include $t_1$, while $T_2$ does not include $t_b$).          
\end{proof}

\subsection{Canonical graphs and bicliques in graphs of large tree-width}
\label{sec:ltw}

\begin{theorem}\label{thm:prefinal}
For every $s,q$, there is a number $X=X(s,q)$ such that 
every graph of tree-width at least $X$ contains 
either 
\begin{itemize}
\item a canonical graph of order at least $s$ as an induced subgraph or
\item a biclique of order $q$ as a subgraph.
\end{itemize}
\end{theorem}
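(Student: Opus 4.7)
The plan is exactly Step 4 of the roadmap laid out in Section~\ref{sec:plan}: chain together the three main lemmas of Sections~\ref{sec:tw-rake},~\ref{sec:from},~\ref{sec:dense} with appropriate parameter choices, working backwards from the desired conclusion.

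Fixing $s$ and $q$, I would first apply Lemma~\ref{lem:dense} with density parameter $\ell := s+5$. This produces a threshold $D := D(s,q,s+5)$ such that any graph containing an $(s+5)$-dense $D$-rake as a subgraph already contains either a canonical graph of order at least $s$ as an induced subgraph or a biclique of order $q$ as a subgraph. Hence it is enough to produce such a dense rake.

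Next, I would invoke Lemma~\ref{lem:to-dense-rake}, whose conclusion, when applied with rake size $D$ and the parameter controlling density set to $s$, is that any graph containing a $(D+2)$-rake as a subgraph contains either an $(s+5)$-dense $D$-rake as a subgraph or a canonical graph of order at least $s$ as an induced subgraph. The second case already gives what we want, so it suffices to produce a $(D+2)$-rake as a subgraph. Finally, Lemma~\ref{lem:tw-rake} applied with $k := D+2$ gives a constant $f(D+2)$ such that any graph of tree-width at least $f(D+2)$ contains a $(D+2)$-rake as a subgraph. Setting
\[
X(s,q) \; := \; f\bigl(D(s,q,s+5)+2\bigr)
\]
therefore gives the desired bound: any graph of tree-width at least $X(s,q)$ contains a $(D+2)$-rake, which in turn contains either a canonical graph of order at least $s$ (done) or an $(s+5)$-dense $D$-rake, and the latter in turn contains either a canonical graph of order at least $s$ or a biclique of order $q$ (done in either case).

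The proof is essentially a bookkeeping argument once all three component lemmas are in place, so there is no real obstacle; the only thing to verify is that the density parameters match, i.e.\ that the $(s+5)$-density output by Lemma~\ref{lem:to-dense-rake} is exactly the $\ell$-density required by Lemma~\ref{lem:dense} under the choice $\ell = s+5$. This matching is what dictates the seemingly arbitrary ``$s+5$'' appearing in the statement of Lemma~\ref{lem:to-dense-rake}, and with it the chain of implications closes without any further work.
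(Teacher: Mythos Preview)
Your proposal is correct and matches the paper's own proof essentially verbatim: the paper sets $X(s,q):=f(D(s,q,s+5)+2)$ and chains Lemmas~\ref{lem:tw-rake}, \ref{lem:to-dense-rake}, and \ref{lem:dense} in exactly the way you describe, with the same parameter choices. The only cosmetic difference is that you present the argument backwards (from conclusion to hypothesis) whereas the paper presents it forwards.
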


\begin{proof}
We define $X(s,q)$ as $X(s,q):=f(D(s,q,s+5)+2)$, where $f$ comes from Lemma~\ref{lem:tw-rake}
and $D$ comes from Lemma~\ref{lem:dense}.
If a graph $G$ has tree-width at least $X(s,q)$, then by Lemma~\ref{lem:tw-rake}
it contains $D(s,q,s+5)+2$-rake $R$ as a subgraph. 
Then, by Lemma~\ref{lem:to-dense-rake}, 
$G$ contains either a canonical graph of order at least $s$ as an induced subgraph,
or an $s+5$-dense $D(s,q,s+5)$-rake as a subgraph. In the first case, the theorem is proved.
In the second case, we conclude by Lemma~\ref{lem:dense} that 
$G$ contains either a canonical graph of order at least $s$ as an induced subgraph or
a biclique of order $q$ as a subgraph. 
\end{proof}

\subsection{Proof of Theorem~\ref{thm:main-wqo}}
\label{sec:summary}
Let $\cal Y$ be a hereditary class of graphs with $K_{q,q}$ and $K_r$ contained  
in the set of forbidden induced subgraphs
and assume $\cal Y$ is well-quasi-ordered by the induced subgraph relation. 

Suppose by contradiction that $\cal Y$
contains an infinite sequence ${\cal Y}'$ of graphs of increasing tree-width. In this sequence, there 
must exists a graph $G^1$ of tree-width at least $X(s,q)$, where $X(s,q)$ is defined in Theorem~\ref{thm:prefinal} and 
$s$ is an arbitrarily chosen constant.
Then by Theorem~\ref{thm:prefinal} $G^1$ contains a canonical graph $H^1$ of order at least $s$. We denote 
the order of $H^1$ by $s_1$ and find in ${\cal Y}'$ a graph $G^2$ of tree-width at least $X(s_1+1,q)$.
$G^2$ must contains a canonical graph $H^2$ of order $s_2\ge s_1+1$, and so on. 
In this way, we construct an infinite sequence $H^1,H^2,\ldots$, which form an antichain by Claim~\ref{claim:antichain}.
This contradicts the assumption that $\cal Y$ is well-quasi-ordered by the induced subgraph relation and 
hence shows that $\cal Y$ is of bounded tree-width.

\bibliographystyle{plain}
\bibliography{width}

\end{document}